%
%
%
\documentclass{amsart}
\usepackage{tikzit}

\tikzstyle{red}=[fill={rgb,255: red,191; green,0; blue,64}, draw=black, shape=circle]
\tikzstyle{blue}=[fill=blue, draw=black, shape=circle]
\tikzstyle{green}=[fill=green, draw=black, shape=circle]
\tikzstyle{black}=[fill=black, draw=black, shape=circle]
\tikzstyle{white}=[fill=white, draw=black, shape=circle]

\tikzstyle{dir}=[<-]
\tikzstyle{dir}=[->]
\tikzstyle{dashed }=[-, dashed]

\usepackage{graphicx}
\usepackage{float}
\usepackage{amssymb}
\usepackage{epstopdf}
\DeclareGraphicsRule{.tif}{png}{.png}{`convert #1 `dirname #1`/`basename #1 .tif`.png}
\usepackage{amsmath,amsthm,amscd,amssymb}
\usepackage{latexsym}
\usepackage[colorlinks,citecolor=red,pagebackref,hypertexnames=false]{hyperref}
\usepackage{geometry}                
\geometry{letterpaper}

\usepackage[us, 12hr]{datetime}

\numberwithin{equation}{section}

\theoremstyle{plain}
\newtheorem{theorem}{Theorem}[section]
\newtheorem{lemma}[theorem]{Lemma}

\theoremstyle{definition}
\newtheorem{definition}[theorem]{Definition}

\newtheorem{problem}[theorem]{Problem}

\newtheorem{case[theorem]}{Case}

\theoremstyle{remark}
\newtheorem{remark}[theorem]{Remark}

\numberwithin{equation}{section}





\begin{document}
\title{On complete and incomplete exponential systems} 


\author{Alex Iosevich and Azita Mayeli}

\keywords{Orthogonal exponential systems, $\phi$-approximately orthogonal systems} 
\date{Last edit: \currenttime, \today}

\thanks{The work of the first listed author was supported in part by the National Science Foundation under grant no. HDR TRIPODS - 1934962}

\begin{abstract} Given a bounded domain $\Omega \subset {\Bbb R}^d$ with positive measure and a finite set $A=\{a^1, a^2, \dots, a^d\}$, we say that the set 
${\mathcal E}(A)={\{e^{2 \pi i x \cdot a^j}\}}_{a^j \in A}$ is a complete exponential system if for every $\xi \in {\Bbb R}^d$, there exists $1 \leq j \leq d+1$ such that 
\begin{equation} \label{completedef} \int_{\Omega} e^{-2 \pi i x \cdot (a^j-\xi)} dx \not=0; \end{equation} otherwise ${\mathcal E}(A)$ is called an incomplete exponential system. In this paper, we essentially classify complete and incomplete exponential systems when $\Omega=B_d$, the unit ball, and when $\Omega=Q_d$, the unit cube. 

Given a bounded domain $\Omega$, we say that $e^{2 \pi i x \cdot a}, e^{2 \pi i x \cdot a'}$ are $\phi$-approximately orthogonal if 
$$|\widehat{\chi}_{\Omega}(a-a')| \leq \phi(|a-a'|), \ a\neq a'$$ where $\phi: [0, \infty) \to [0, \infty)$ is a bounded measurable function that tends to $0$ at infinity. We prove that $L^2(B_d)$ does not possess a $\phi$-approximate orthogonal basis of exponentials for a wide range of functions $\phi$. The proof involves connections with the theory of distances in sets of positive Lebesgue upper density originally developed by Furstenberg, Katznelson and Weiss (\cite{FKW90}).  
\end{abstract} 

\maketitle

\section{Introduction} The study of exponential functions on domain in ${\Bbb R}^d$ is an old and time-honored subject. Let $A$ be a discrete subset of ${\Bbb R}^d$ and define 
$$ {\mathcal E}(A)=\{e^{2 \pi i x \cdot a}, \ a \in A\},$$ the set of exponentials with frequencies in $A$. Let $\Omega$ be a bounded domain in ${\Bbb R}^d$ of positive Lebesgue measure. 


\begin{definition} We say that ${\mathcal E}(A)$ is an orthogonal basis of $L^2(\Omega)$ if ${\mathcal E}(A)$ is an orthogonal exponential system over  $\Omega$ and also a basis for $L^2(\Omega)$. \end{definition} 

The study of orthogonal exponential bases in recent decades centered around the celebrated Fuglede Conjecture (\cite{Fug74}) which says that if $\Omega$ be a bounded domain in ${\Bbb R}^d$ of positive Lebesgue measure, then $L^2(\Omega)$ possesses an orthogonal basis of exponentials if and only if $\Omega$ tiles ${\Bbb R}^d$ by translation in the sense that there exists a discrete set $T \subset {\Bbb R}^d$ such that 
$$ \sum_{\tau \in T} \chi_{\Omega}(x-\tau)=1 \ a.e. \ x\in\Bbb R $$

While the Fuglede Conjecture is, in general, false, as shown by Terry Tao (\cite{T04}) in one direction, and by Kolountzakis-Matolcsi (\cite{KM06}) in the other, it has given rise to a multitude of fruitful investigations that have significantly improved our understanding of orthogonal exponential bases and tiling in Euclidean space and locally compact abelian groups. See, for example, \cite{GL17,GL20,IKT01,IKT03, IKP01,IMP17,LevM19,K99,M05,PhilippBirklbauer,FMV2019}, and the references contained therein. 
 
The Fuglede conjecture spawned a number of related questions that are interesting in their own right. For example, Fuglede, Iosevich, Kolountzakis, Rudnev, and others studied the following question. Given a bounded domain $\Omega \subset {\Bbb R}^d$ of positive Lebesgue measure, what is the largest possible size of $A \subset {\Bbb R}^d$ such that ${\mathcal E}(A)$ is an orthogonal exponential system in $L^2(\Omega)$? See, for example, \cite{Fug01}, \cite{IR03}, \cite{IK13}, and the references contained therein. In particular, these authors have shown that if $\Omega=B_d$, the unit ball in dimensions $\ge 2$ and ${\mathcal E}(A)$ is an orthogonal system with respect to $B_d$, then $A$ is finite. It is widely believed that the size of $A$ cannot exceed $d+1$, but no quantitative result of any sort is currently known. In \cite{IR03} the authors constructed a symmetric convex body $K$ with a smooth boundary and everywhere non-vanishing curvature in ${\Bbb R}^d$, $d \equiv 1 \mod 4$, such that there exists an infinite set $A$ where ${\mathcal E}(A)$ is an orthogonal system in $L^2(K)$. They also proved that if $d \not= 1 \mod 4$, then under the same assumptions $A$ must be finite. We are not aware of any results on bounds for the number of orthogonal exponentials on general bounded domains in ${\Bbb R}^d$. \\

A related question was formulated and considered by Lai and  the second listed author in \cite{LM17} in the special case of lattices and some partial results were obtained in that context, as we explain it below. 

\begin{definition} \label{expon_complete}(\cite{LM17}) Let $\Omega$ be a bounded domain in ${\Bbb R}^d$ of positive Lebesgue measure. Given a countable set $A\subset \Bbb R^d$, we say that ${\mathcal E}(A)={\{e^{2 \pi i x \cdot a}\}}_{a \in A}$ is a complete exponential system in $L^2(\Omega)$ if for any given $\xi \in {\Bbb R}^d$ there exists $a \in A$ such that 
$$ \int_{\Omega} e^{2 \pi i x \cdot (a-\xi)} dx \not=0.$$
Otherwise, we say the system ${\mathcal E}(A)$ is exponentially incomplete. 
\end{definition} 

Definition \ref{expon_complete} suggests the study of the following problem. 

\begin{problem}\label{mamaproblem} Given a bounded domain $\Omega\subset \Bbb R^d$ with positive measure, identify all discrete and countable  sets $A\subset \Bbb R^d$  for which the  set $\mathcal E(A)= \{e^{2\pi i x\cdot a} : \ a\in A\}$ is a complete exponential system in $L^2(\Omega)$. \end{problem}  


The exponential completeness problem is motivated in \cite{LM17} by the study of Gabor bases generated by the characteristic function of a bounded domain. Let $\Omega\subset \Bbb R^d$ be a bounded domain of positive Lebesgue measure and let $S\subset \Bbb R^{2d}$ be discrete and countable.  The Gabor system generated by the characteristic (window) function $g:=\chi_\Omega$ with respected to  $S$ is defined as
\begin{align}\label{Gabor_sys}
\mathcal G(g, S)=\left\{ e^{2\pi x\cdot b} \chi_\Omega(x-a): \quad (a,b)\in S\right\}.
\end{align}
If $G(g,S)$ is an orthogonal basis for $L^2(\Bbb R^d)$, we say it is a Gabor basis and $S$ is called a {\it Gabor spectrum}. Of particular interest  is the set $S$ where the entries in each pair    $(a,b)$ are correlated vectors.  For  example, if  $S$ is a lattice, in many cases it can be identified as  $S=M(\Bbb Z^{2d})$, where $M=[I, 0: C, I]$ is a $2d\times 2d$  upper triangular block matrix  and $I$ is the  $d\times d$ identity matrix. In this case, every pair in  $S$ has the form $(m,Cm+n)$, for some $m,n\in \Bbb Z^d$,  and with the second entry  related to the first entry through the matrix $C$. In this situation, the orthogonally of  functions in the Gabor system (\ref{Gabor_sys})  holds if 
 
\begin{align}\label{orthog}
\int_{\Omega\cap \Omega+m} e^{-2\pi i Cm\cdot x} e^{-2\pi i n\cdot x} dx =0 \quad \forall  m, n \in {\Bbb Z}^d .
\end{align}

The equality  (\ref{orthog}) shows that the orthogonality of the Gabor system  (\ref{Gabor_sys})  in this example is equivalent to say that for a fixed   $m\in {\Bbb Z}^d$,  the exponential function $e^{-2 \pi i Cm \cdot x}$ is orthogonal to the exponential system $\mathcal E(\Bbb Z^d)=\{e^{2 \pi i n \cdot x}: n\in {\Bbb Z}^d\}$ with respect to  the domain $\Omega \cap \Omega+m$. \\
 
The goal of this paper is to study Problem \ref{mamaproblem} in the case when $\Omega$ is the unit cube or the unit ball in $\Bbb R^d$,  and the set of exponentials $\mathcal E(A)$  is finite. Our methods  {\it  probably  extend} to  the case when $\Omega$ is a bounded symmetric convex set with a smooth boundary and everywhere non-vanishing Gaussian curvature. The obstacle to study this problem in more generality is the lack of detailed knowledge of the behavior of the zero set of the Fourier transform of the indicator function of a general bounded domain. 

\vskip.125in 

Our first result is the following. 

\begin{theorem} \label{maingeneral} Let $\Omega=B_d$, $d \ge 2$, the unit ball, or $Q_d$, $d \ge 2$, the unit cube. Then the following hold. 

\vskip.125in 

i) The $d(d+1)$-dimensional Lebesgue measure of the $(d+1)$-tuples $(a^1,a^2, \dots, a^{d+1}) \in {({\Bbb R}^d)}^{d+1}$ 

such that ${\mathcal E}(A)$, $A=\{a^1,a^2, \dots, a^{d+1}\}$, is not a complete exponential system in $L^2(\Omega)$ is $0$. 

\vskip.125in 

ii) Suppose that $\Omega=Q_d$, the unit cube and $A\subset \Bbb R^d$.  Then if $\# A \leq d$, then ${\mathcal E}(A)$ is not a

 complete exponential system in $L^2(Q_d)$. 

\vskip.125in 

iii) If $\Omega=B_d$, the unit ball,  then the $d^2$-dimensional Lebesgue measure of the set of $d$-tuples 

$(a^1, \dots, a^d)$ such that ${\mathcal E}(A)$ is a complete exponential system in $L^2(B_d)$ is $0$. 

\vskip.125in 

iv) In both the case of the cube $\Omega=Q_d$ and $\Omega=B_d$, for any $N \ge d+1$ there exist $a^1, \dots, a^{N}$ 

in ${\Bbb R}^d$ such that ${\mathcal E}(A)$ is not a complete system in $L^2(\Omega)$. 

\vskip.125in 

v) If $\Omega=B_2,$ the unit ball in ${\Bbb R}^2$, and if $A=\{a^1,a^2\}$, $a^1 \not=a^2$, then ${\mathcal E}(A)$ is always incomplete. 

\vskip.125in 

vi) If $\Omega=B_d$, the unit ball, and $d \ge 3$, then the set of $d$-tuples  $(a^1, \dots, a^d) \in {({\Bbb R}^d)}^{d}$ such that

 ${\mathcal E}(A)$ is a complete exponential system in $L^2(B_d)$ is not empty. In other words, there existdata 
 
 $A=\{a^1, \dots, a^d\}$, $a^j \in {\Bbb R}^d$ such that ${\mathcal E}(A)$ is a complete exponential system in $L^2(B_d)$. 

\end{theorem}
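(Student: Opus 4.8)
The plan rests on a single reformulation. Writing $Z_\Omega = \{\eta \in {\Bbb R}^d : \widehat{\chi}_\Omega(\eta) = 0\}$ for the zero set of the Fourier transform of the indicator, Definition \ref{expon_complete} says exactly that $\mathcal E(A)$ is \emph{incomplete} if and only if some translate of $A$ lies entirely in $Z_\Omega$, i.e. there is a $\xi$ with $A - \xi \subseteq Z_\Omega$. The two geometries are explicit: for the cube $\widehat{\chi}_{Q_d}(\eta) = \prod_{k=1}^d \frac{\sin \pi \eta_k}{\pi \eta_k}$, so $Z_{Q_d} = \bigcup_{k=1}^d \bigcup_{m \in {\Bbb Z}\setminus\{0\}}\{\eta : \eta_k = m\}$ is a countable union of hyperplanes; for the ball $\widehat{\chi}_{B_d}(\eta)$ is a constant times $|\eta|^{-d/2}J_{d/2}(2\pi|\eta|)$, so $Z_{B_d} = \bigcup_{k\ge 1}\{\eta : |\eta| = \rho_k\}$ is a countable union of concentric spheres whose radii $\rho_k$ are the rescaled (simple) zeros of $J_{d/2}$, an unbounded set. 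In both cases $Z_\Omega$ is a countable union of smooth $(d-1)$-dimensional hypersurfaces. I would prove the six parts as follows.

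For (i) and (iii) I use the parametrization $\Phi(\xi, z^1, \dots, z^{m}) = (\xi + z^1, \dots, \xi + z^{m})$, with $z^j \in Z_\Omega$, whose image is exactly the set of incomplete $m$-tuples. For (i), $m = d+1$ and either domain: since $Z_\Omega$ is a countable union of smooth $(d-1)$-manifolds, the domain of $\Phi$ is a countable union of manifolds of dimension $d + (d+1)(d-1) = d(d+1) - 1$, strictly below the ambient dimension $d(d+1)$, so the affine (hence Lipschitz) image has $d(d+1)$-dimensional measure zero. For (iii) the count runs the other way: I show the incomplete $d$-tuples already have full measure. If $a^1, \dots, a^d$ are affinely independent they span a hyperplane $H$ with unique circumcenter $c$, and the points equidistant from all $a^j$ form the normal line $\xi = c + t n$, along which the common distance is $\sqrt{r_0^2 + t^2}$ ($r_0$ the circumradius), sweeping all of $[r_0, \infty)$; choosing a Bessel radius $\rho_k \ge r_0$ and solving for $t$ gives a $\xi$ with $|a^j - \xi| = \rho_k$ for every $j$, so $A - \xi \subseteq Z_{B_d}$ and $\mathcal E(A)$ is incomplete. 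Hence every complete $d$-tuple is affinely dependent, i.e. satisfies $\det[a^2 - a^1 | \cdots | a^d - a^1] = 0$, a measure-zero condition. The same equidistant-line argument applied to two distinct points in ${\Bbb R}^2$ (which are automatically affinely independent) proves (v): their perpendicular bisector meets some sphere $|\eta| = \rho_k$, so $\mathcal E(A)$ is always incomplete.

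Parts (ii) and (iv) are direct constructions exploiting that $Z_\Omega$ contains whole hyperplanes (cube) or whole spheres (ball). For (ii), given $\#A = k \le d$, dedicate coordinate $j$ to the $j$-th point: setting $\xi_j = a^j_j - 1$ forces $(a^j - \xi)_j = 1 \in {\Bbb Z}\setminus\{0\}$, so each $a^j - \xi$ lies on a hyperplane of $Z_{Q_d}$ and $\mathcal E(A)$ is incomplete. For (iv) and any $N \ge d+1$: in the cube case place all $N$ frequencies on a common hyperplane $\{\eta_1 = c\}$ and take $\xi = (c-1, 0, \dots, 0)$; in the ball case place them on a single sphere $\{|\eta - \xi_0| = \rho_k\}$ and take $\xi = \xi_0$. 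In both cases $A - \xi \subseteq Z_\Omega$, so the system is incomplete.

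The real work is (vi): for $d \ge 3$ I must exhibit a \emph{complete} $d$-tuple, which by (iii)/(v) must be affinely degenerate. I would take the points collinear, $a^j = p^j e + v$, so that for any $\xi$ the squared distances depend only on two real parameters, the projection $q$ of $\xi$ onto the line and the transverse distance $H = \mathrm{dist}(\xi, \text{line})^2 \ge 0$, via $|a^j - \xi|^2 = (p^j - q)^2 + H$. Incompleteness then means that for some index assignment $(k_1, \dots, k_d)$ the $d$ equations $(p^j - q)^2 + H = \rho_{k_j}^2$ have a solution $(q, H)$. Eliminating $q$ and $H$ from any three of them yields a nontrivial polynomial relation among $p^1, \dots, p^d$ (its top-degree part does not cancel, regardless of the radii), cutting out a measure-zero hypersurface in the configuration space ${\Bbb R}^d$; since $d \ge 3$ the elimination is genuinely overdetermined, whereas for $d = 2$ it is not, matching (v). Taking the union over the countably many index assignments, the incomplete collinear configurations form a measure-zero set, so a full-measure — in particular nonempty — family of collinear configurations gives complete systems. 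The main obstacle is precisely this step: one must verify the overdetermination cleanly, i.e. that the elimination produces a relation not identically satisfied for every admissible index assignment and sign branch, using only the distinctness and unboundedness of the $\rho_k$ rather than finer arithmetic; the countable union and the constraint $H \ge 0$ are then routine.
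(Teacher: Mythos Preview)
Your proposal is correct and follows the same overall strategy as the paper: both rest on the reformulation ``$\mathcal{E}(A)$ incomplete $\Leftrightarrow A-\xi\subseteq Z_\Omega$ for some $\xi$,'' and both handle (vi) by taking collinear points and arguing that solvability of the resulting two-unknown system imposes a countable family of algebraic constraints on the positions. The executions differ in two places worth noting. For (i) the paper treats $Q_d$ by a pigeonhole argument on coordinates (if all pairwise coordinate differences are irrational, no $\xi$ can match $d+1$ points using only $d$ coordinate slots) and $B_d$ by a Fubini-type count over products of spheres; your single Lipschitz-image dimension count, $d+(d+1)(d-1)=d(d+1)-1$, handles both domains at once and is cleaner. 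For (iv) on the ball the paper separates $d\ge 3$ (points on an equatorial $(d-2)$-sphere, $b$ on the axis) from $d=2$ (an Intermediate Value Theorem construction); your observation that one may simply drop all $N$ points on a single Bessel sphere $\{|\eta|=\rho_k\}$ and take $\xi=0$ works uniformly in $d\ge 2$ and is strictly simpler. For (vi) the paper carries out explicitly the elimination you sketch: with $a^1=0$, $a^2=e_1$, $a^3=\alpha e_1$ it derives $\alpha^2+\alpha(r_2^2-r_1^2-1)-(r_3^2-r_1^2)=0$, giving at most two values of $\alpha$ per triple $(r_1,r_2,r_3)$ of Bessel zeros and hence only countably many bad $\alpha$; this is precisely the non-triviality of the eliminated relation that you correctly flag as the main point to check.
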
 

\vskip.125in 

\begin{remark} If $d=1$, the cube is the same as the ball and the questions raised above are more or less straightforward. Since the Fourier transform of the indicator function of $[0,1]$ vanishes on ${\Bbb Z}$, given any $a \in {\Bbb R}$, we can choose $a' \in {\Bbb R}$ with $a-a' \in {\Bbb Z}$, and this makes $e^{2\pi i x \cdot a}$ and $e^{2 \pi i x \cdot a'}$ orthogonal in $L^2([0,1])$. Given any two vectors $a,a' \in {\Bbb R}$, the question of whether there exists $b \in {\Bbb R}$ such that $e^{2 \pi i x \cdot b}$ is orthogonal to both $e^{2\pi i x \cdot a}$ and $e^{2\pi i x \cdot a'}$ in $L^2([0,1])$ comes down to whether $b-a$ and $b-a'$ are both integers. If $b-a=k$ for some integer $k$, and $b-a'=k'$ for some integer $k'$, then $k+a=k'+a'$, which forces $a-a'$ to be an integer. As the reader shall see, the situation in higher dimensions is more interesting. \end{remark} 

\begin{remark} The statement of Theorem \ref{maingeneral} should hold, with small adjustments, if $\Omega$ is any symmetric convex body in ${\Bbb R}^d$, $d \ge 2$. In the case when $\Omega$ is a symmetric convex body with a smooth boundary and everywhere non-vanishing Gaussian curvature, this can probably be accomplished using an elaboration on the techniques of this paper, in view of the results on the zero set of the Fourier transform (see e.g. \cite{Z84}). The case of the general symmetric convex body is probably less accessible as the zero set of the Fourier transform of its characteristic function is more difficult to describe. \end{remark}

\vskip.125in 

\subsection{$\phi$-approximate orthogonality} We now loosen the notion of orthogonality a bit to expose some salient geometrical features of  the underlying domains. 

\vskip.125in 

\begin{definition} Given a bounded domain $\Omega\subset \Bbb R^d$,  we say that for $a\neq a'$ the exponentials  $e^{2 \pi i x \cdot a}, e^{2 \pi i x \cdot a'}$ are $\phi$-approximately orthogonal if 
$$|\widehat{\chi}_{\Omega}(a-a')| \leq \phi(|a-a'|), $$
 where $\phi: [0, \infty) \to [0, \infty)$ is a bounded measurable function that tends to $0$ at infinity. \end{definition} 
It is clear that orthogonal exponential functions on $\Omega$  are $\phi$-approximately orthogonal if we choose $\phi=0$. 

\begin{definition} Given a bounded domain $\Omega$, we say that ${\mathcal E}(A)$ is a $\phi$-approximately orthogonal basis for $L^2(\Omega)$ if ${\mathcal E}(A)$ is a basis for $L^2(\Omega)$ and any two distinct elements of ${\mathcal E}(A)$ are $\phi$-approximately orthogonal. \end{definition} 

In this paper, we shall primarily focus on the $\phi$-orthogonality in the case $\Omega=B_d$, but we plan to engage in a more systematic study in the sequel. It is well-known (see \ref{ballfourierformula} below) that 
\begin{align}\label{upper-esitmation}
 |\widehat{\chi}_{B_d}(\xi)| \leq C{(1+|\xi|)}^{-\frac{d+1}{2}},
 \end{align}
  which implies that if 
$$\phi(t)=c{(1+t)}^{-\frac{d+1}{2}}$$ with a suitable constant $c$, then every collection ${\mathcal E}(A)$ is $\phi$-approximately orthogonal. It follows that the only non-trivial case of $\phi$-orthogonality in $L^2(B_d)$ is when the decay rate of $\phi$ as $t \to \infty$ is faster than the uniform radial decay rate of $\widehat{\chi}_{B_d}$. Our main result in this direction is the following. 

\begin{theorem} \label{mainapprox} Let $\phi: [0, \infty)\to [0, \infty)$ be a any bounded and measurable  function such that 
\begin{equation} \label{phismall} \lim_{t \to \infty} {(1+t)}^{\frac{d+1}{2}} \phi(t)=0. \end{equation} 

Then there does not exist a set $A \subset {\Bbb R}^d$, $d>1$,  such that $L^2(B_d)$ possesses a $\phi$-approximate orthogonal basis ${\mathcal E}(A)$. 
\end{theorem}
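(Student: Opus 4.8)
The plan is to argue by contradiction, playing the oscillation of $\widehat{\chi}_{B_d}$ against a distance-set theorem for sets of positive density. Suppose ${\mathcal E}(A)$ were a $\phi$-approximate orthogonal basis of $L^2(B_d)$, so that $|\widehat{\chi}_{B_d}(a-a')|\le \phi(|a-a'|)$ for all distinct $a,a'\in A$. The first step is to record that, in the radial form of $\widehat{\chi}_{B_d}$ recalled in (\ref{ballfourierformula}), $\widehat{\chi}_{B_d}(\xi)=c_d|\xi|^{-d/2}J_{d/2}(2\pi|\xi|)$, the transform not only \emph{decays} like $|\xi|^{-(d+1)/2}$ but actually \emph{attains} this order infinitely often. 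Using the Bessel asymptotics $J_{d/2}(s)=\sqrt{2/(\pi s)}\,(\cos(s-\tfrac{d\pi}{4}-\tfrac{\pi}{4})+O(s^{-1}))$, I would produce a constant $c_1>0$, a threshold $t_*$, and a family of intervals $I_k\subset[t_*,\infty)$, each of length at least $\tfrac13$ and with consecutive gaps at most $1$, such that $|\widehat{\chi}_{B_d}(\xi)|\ge c_1|\xi|^{-(d+1)/2}$ whenever $|\xi|\in I_k$. Thus $|\widehat{\chi}_{B_d}|$ is of maximal size on a $1$-dense family of spherical shells reaching infinity.

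The second step converts the basis hypothesis into positive Lebesgue density. A basis of exponentials for $L^2(B_d)$ is in particular a frame, so by the Landau density theorem its lower Beurling density is at least $|B_d|>0$, and the Bessel bound forces $A$ to be relatively separated. Passing to a separated subset $A'\subseteq A$ of positive upper Beurling density and thickening each of its points to a ball of small radius $r$ produces $E=\bigcup_{a\in A'}B(a,r)$; for $r$ below half the separation constant these balls are disjoint, which is exactly what turns counting density into positive upper Lebesgue density of $E$.

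The third step invokes the Furstenberg--Katznelson--Weiss theorem: since $d\ge 2$ and $E$ has positive upper density, there is $t_1$ so that every $t\ge t_1$ equals $|u-v|$ for some $u,v\in E$. As $u,v$ lie in radius-$r$ balls about points of $A'$, and as distinct balls are forced once $t\ge t_1>2r$, the distance set $D(A')=\{|a-a'|:a\ne a'\in A'\}\subseteq D(A)$ is $2r$-dense in $[t_1,\infty)$. Fixing $r<\min(\tfrac12\delta,\tfrac1{16})$ (with $\delta$ the separation constant) makes $4r<\tfrac13\le|I_k|$, so every interval $I_k$ with $k$ large contains some $|a-a'|\in D(A')$. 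For such a pair, $\phi$-orthogonality and the lower bound on $|\widehat{\chi}_{B_d}|$ give $\phi(|a-a'|)\ge|\widehat{\chi}_{B_d}(a-a')|\ge c_1|a-a'|^{-(d+1)/2}$, whence $(1+|a-a'|)^{(d+1)/2}\phi(|a-a'|)\ge c_1>0$; letting $k\to\infty$ drives $|a-a'|\to\infty$ and contradicts (\ref{phismall}).

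The heart of the matter, and the step I expect to demand the most care, is the interface between the discrete set $A$ and the continuous hypotheses of the Furstenberg--Katznelson--Weiss theorem. One must genuinely produce positive \emph{Lebesgue} density (combining the Landau lower bound for frames with separation from the Bessel bound), and then control how thickening perturbs distances so that the realized distances still land inside the thin shells $I_k$ on which $\widehat{\chi}_{B_d}$ is large. Everything hinges on the width of the $I_k$ being bounded below \emph{uniformly in $k$}: this is what allows a single small thickening radius $r$ to resolve all the shells at once, and verifying it from the Bessel asymptotics uniformly for large $|\xi|$ is the one quantitative point that must be checked rather than quoted.
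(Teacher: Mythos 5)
Your proposal is correct and takes essentially the same route as the paper: both arguments deduce separation and positive upper Beurling density of $A$ from the basis hypothesis, thicken $A$ to a set of positive upper Lebesgue density, and play the Furstenberg--Katznelson--Weiss distance theorem against the oscillation of $\widehat{\chi}_{B_d}$ given by the Bessel/Herz asymptotics. The only difference is organizational: the paper uses $\phi$-orthogonality to force every large distance in $A$ (hence in the thickened set) to cluster near the discrete set $\frac{k}{2}+\frac{d-1}{8}$, contradicting FKW, whereas you run the contrapositive, letting FKW force some $|a-a'|$ into an interval where $|\widehat{\chi}_{B_d}(\xi)|\geq c_1{|\xi|}^{-\frac{d+1}{2}}$ and contradicting (\ref{phismall}) directly --- the same argument read in the opposite direction.
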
 

\vskip.125in 

\begin{remark} The conclusion of Theorem \ref{mainapprox} is false in ${\Bbb R}$ because $L^2([0,1])$ has an orthogonal basis of exponentials. \end{remark} 

\vskip.125in 

\begin{remark} A variety of questions related to the notion of $\phi$-approximate orthogonality can and should be addressed in the sequel. For example, Theorem \ref{maingeneral} can be reexamined with exponential completeness replaced by the notion of $\phi$-approximate exponential completeness. Another area worthy of attention is a detailed study of the structure of $\phi$-orthogonal families in $L^2(\Omega)$, where $\Omega$ is a given bounded domain in ${\Bbb R}^d$. In \cite{IR03}, the first listed author and Rudnev proved that if ${\mathcal E}(A)$ is orthogonal with respect to $L^2(K)$, where $K$ is a bounded symmetric convex set with a smooth boundary and everywhere non-vanishing curvature, then $A$ is finite if $d \not=1 \mod 4$, otherwise $A$ is finite or contained in a  line in ${\Bbb R}^d$. The proof of that result shows that it still holds if orthogonality is replaced by $\phi$-orthogonality for $\phi$'s that are sufficiently rapidly decaying at infinity, and an interested reader can use the mechanism in \cite{IR03} and this paper to obtain a fully quantitative version of this statement. A detailed study of this phenomenon would be quite interesting.  
\end{remark}

\section{Proof of Theorem \ref{maingeneral}}

\vskip.125in 

\subsection{Proof of part ii)} Let $a^1, a^2, \dots, a^d$ denote a collection of vectors in ${\Bbb R}^d$, where 
$$a^j=(a^j_1, a^j_2, \dots, a^j_d).$$ It is not difficult to see that there exists $b\in \Bbb R^d$ such that $e^{2 \pi i x \cdot b}$ is orthogonal to every $e^{2 \pi i x \cdot a^j}$, $j=1,2, \dots, d$ in $L^2(Q_d)$. Indeed, take 
$$b=(a^1_1+1, a^2_2+1, \dots, a^d_d+1).$$ 

Then 
\begin{equation} \label{cubefourier} \int_{Q_d} e^{2 \pi i x \cdot (b-a^j)} dx=\prod_{k=1}^d \int_0^1 e^{2 \pi i x_k(b_k-a^j_k)} dx_k, \end{equation} and $j$'th element of the product on the right hand side is equal to $0$ since $b$ differs from $a^j$ by $1$ in the $j$'th coordinate. This establishes part ii) for $Q_d$ since if the number of exponentials is $<d$, the same argument works. 

\vskip.125in 

\subsection{Proof of part iii)} Suppose that $A=\{a^1, a^2, \dots, a^d\}$ and suppose that the vectors 
$$\{a^2-a^1, a^3-a^1, \dots, a^d-a^{1}\}$$ are linearly independent. Recall that (\cite{H62}) when $d>1$ 
\begin{equation} \label{ballfourierformula} \widehat{\chi}_{B_d}(\xi)={|\xi|}^{-\frac{d}{2}} J_{\frac{d}{2}}(2 \pi |\xi|), \end{equation}  and that the zeroes of the Bessel function $J_{\frac{d}{2}}$ are uniformly separated and are of the form 
\begin{equation} \label{zeroes} \frac{m}{2}+\frac{d-1}{8}+O \left(\frac{1}{m} \right), \ m \in {\Bbb Z}^{+}. \end{equation} 

By elementary geometry, there exists a line equidistant from each $a^j$. To see this, assume, without loss of generality that $a^1=(0, \dots, 0)$. The intersection of spheres of sufficiently large radius $r$ centered at $a^2, \dots, a^{d-1}$ consists of precisely two points by the linear independence condition, and as the radius varies, these points trace out a line. 

Choose a point on this line a distance $R$ from each $a^j$, such that $J_{\frac{d}{2}}(2 \pi R)=0$ and call this point $b$. Then it is clear that 
$$ \int_{B_d} e^{2 \pi i x \cdot (b-a^j)} dx=0$$ for each $j$ and the proof is complete since the set of $d$ tuples of vectors in ${\Bbb R}^d$ that do not satisfy the condition that the vectors 
$\{a^2-a^1, a^3-a^1, \dots, a^d-a^{1}\}$ are linearly independent is a set of $d^2$-dimensional Lebesgue measure $0$. 

\vskip.125in 

\subsection{Proof of part i) for $Q_d$} Let $A=\{a^1, a^2, \dots, a^{d+1}\}$. With formula (\ref{cubefourier}) in mind, note that $e^{2 \pi i x \cdot b}$ is orthogonal to each $e^{2 \pi i x \cdot a^j}$ if and only if $b$ differs from each $a^j$ by an integer in at least one coordinate. Suppose that $a^j$'s differ from one another by an irrational number in each coordinate. Then it is clear that $b$ cannot differ from each $a^j$ by an integer in at least one coordinate because there are only $d$ coordinates and there are $d+1$ $a^j$'s which differ from one another by an irrational number in each coordinate by assumption. It is also clear that the set of $d+1$-tuples $(a^1, \dots, a^{d+1})$ which do not differ from one another by an irrational number in each coordinate is a set of $d(d+1)$-dimensional Lebesgue measure $0$, so the proof is complete. 

\vskip.125in 

\subsection{Proof of part i) for $B_d$} Let $A=\{a^1, \dots, a^{d+1}\}$ and assume without loss of generality that $a^{d+1}=(0, \dots, 0)$. On the unit ball, the existence of a vector $b$ with the orthogonality property to $d+1$ vectors  $a^1,a^2, \dots, a^{d+1}$ is equivalent to say that for any $i$ , the quantity  $|a^i - b|$  belongs to the set of positive zeros of the Bessel function $J_{\frac{d}{2}}(2\pi |\xi|)$.   This also means that all $a^i$ belong to some sphere centered by a fixed vector $b$ and radii in size of  some positive zeros of  Bessel function $J_{d/2}(2\pi \cdot)$.  With this description, we can express $\mathcal A$ as 
 
 $$\mathcal A=   \bigcup_{(R_1, \cdots, R_{d+1}) \in \mathcal Z_{d+1}} \left(\bigcup_{\xi\in \Bbb R^d} S^{d-1}_{R_1}(\xi) \times \cdots \times S^{d-1}_{R_{d+1}}(\xi) \right) $$ 
where 
 $$\mathcal Z_{d+1} = 
   \underbrace{  
\mathcal Z(J_{d/2}(2\pi \cdot)) \times \cdots\mathcal Z(J_{d/2}(2\pi \cdot))}_\text{$(d+1)$-times} $$

\noindent  and $S^{d-1}_{R_i}(\xi)$ is the sphere in $\Bbb R^d$ centered at $\xi$ with radius $R_i$.
 
 Notice, the zero set $ Z(J_{d/2}(2\pi \cdot))$ of Bessel function in $\Bbb R^d$ is a countable set. Thus the set $\mathcal Z_{d+1}$  is a countable set in the product space and we can write 
 $$\mu_{d(d+1)}(\mathcal A) \leq \sum_{(R_1,\cdots,R_{d+1})\in\mathcal Z_{d+1}}\mu_{d(d+1)}\left(\bigcup_{\xi\in \Bbb R^d} S^{d-1}_{R_1}(\xi) \times \cdots \times S^{d-1}_{R_{d+1}}(\xi) \right) . $$
 
So, our problem (i.e. finding the $d(d+1)$ measure of the set $\mathcal A$) reduces to finding the measure of the set $\left(\bigcup_{\xi\in \Bbb R^d} S^{d-1}_{R_1}(\xi) \times \cdots \times S^{d-1}_{R_{d+1}}(\xi) \right) $  in $d(d+1)$ dimension.  But this can be obtained by  Fubini's Theorem along the fact that $\mu_{d(d+1)}$ is the product measure as follows: 
 
\begin{align}\notag  
\mu_{d(d+1)}\left(\bigcup_{\xi\in \Bbb R^d} S^{d-1}_{R_1}(\xi) \times \cdots \times S^{d-1}_{R_{d+1}}(\xi) \right)   &= \int_{\xi\in \Bbb R^d} 
 \mu_{d(d+1)}\left( S^{d-1}_{R_1}(\xi) \times \cdots \times S^{d-1}_{R_{d+1}}(\xi) \right) d\xi \\\notag
 &= 
 \int_{\xi\in \Bbb R^d} \prod_i    \underbrace{  \mu_d (S^{d-1}_{R_i}(\xi))}_{=0} d\xi = 0,
\end{align}
and we are done.

\subsection{Proof of part iv) in the case $\Omega=B_d$} In dimensions three and higher, consider the unit sphere centered at the origin. Place any finite number of points $a^1, a^2, \dots, a^N$ on the $(d-2)$-dimensional sphere obtained by intersecting this sphere with the plane given by the equation $x_d=0$. Every point on the line $\{t(0, \dots, 0,1): t \in {\Bbb R} \}$ is equidistant from the points $A=\{a^1, a^2 \dots, a^N\}$. Now choose a point $b$ on this line such that its distance to these points is a zero of $J_{\frac{d}{2}}(2 \pi \cdot)$. In view of the discussion in the proof of part iii), $e^{2 \pi i x \cdot b}$ is orthogonal to $e^{2 \pi i x \cdot a^j}$ for each $1 \leq j \leq N$. See Figure \ref{fig:sphere} for an illustration of the solution in dimension $d=3$. 
\begin{figure}[h!]
 \begin{tikzpicture}[scale=0.7]
\shade[ball color = gray!40, opacity = 0.4] (0,0) circle (2cm);
  \draw (0,0) circle (2cm);
  \draw (-2,0) arc (180:360:2 and 0.6)
  \foreach \p in {0,0.1,...,1} {node[pos=\p]{\huge{\bf$\cdot$}}};
  \draw[dashed] (2,0) arc (0:180:2 and 0.4)
   \foreach \p in {0,0.1,...,1} {node[pos=\p]{\huge{\bf$\cdot$}}};
   \fill[fill=black] (0,0) circle (1pt);
  \draw[dashed] (0,0 ) -- node[below]{\tiny{$r$}} (2,0);
  	\node [style=none] at (-3, 2.2) {\tiny{A zero of $J_{d/2}(2\pi .)$}};
	 \draw [style=dir, bend right] (-2.8, 1.8) to (-1.5, 1);
  \draw[dashed] (-2,0 ) -- (0,3);
      \draw[dashed] (-1.25,0.4 ) --  (0,3);
  \draw[dashed] (0,-3.5) -- (0,3.5);
  	\filldraw [black] (0, 3)  circle (2pt);
		\node [style=none]  at (0, 3) {\tiny \quad \quad $b$};
			\filldraw [black] (0, -3)  circle (2pt);
		\node [style=none]  at (0, -3) {\tiny \quad \quad $-b$};
	 \end{tikzpicture}\centering 
\caption{\footnotesize Each node on the big circle illustrates a point $a^j$ in $A$.  The distance of  the points $b$ and $-b$  on the line from each node is a zero of Bessel function.}
\label{fig:sphere}
\end{figure}
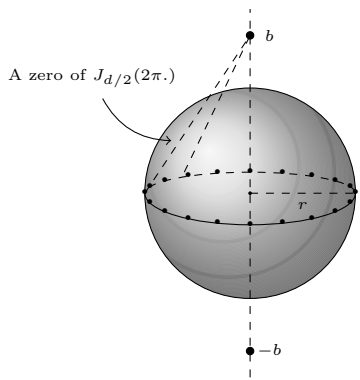
 
We use a slightly different construction in ${\Bbb R}^2$. Let $a^1=(0,0)$, $a^2$ be a vector of length $r$ in the third quadrant, and let $a^3$ be the reflection of $a^2$ across the $x_2$-axis. Let $b=(0,R)$ where $R$ is a zero of $J_1(2 \pi \cdot)$. If $R$, $r$ are sufficiently large, the formula (\ref{zeroes}) and the Intermediate Value Theorem imply that $a^2$ can be chosen such that the distance from $a^2$ to $b$ is also a zero of $J_1(2 \pi \cdot)$. By construction, the distance from $a^3$ to $b$ is the same as the distance from $a^2$ to $b$, and we see that $e^{2 \pi i x \cdot b}$ is orthogonal to $e^{2 \pi i x \cdot a^j}$, $j=1,2,3$, as needed. Indeed, this argument shows that for any $N$, we can construct $N$ $a^j$s so that $e^{2 \pi i x \cdot b}$ is orthogonal to $e^{2 \pi i x \cdot a^j}$ for all $1 \leq j \leq N$, by taking $R,r$ to be sufficiently large and repeating the same argument.     The construction is depicted in  Figure \ref{fig:pendulum}. 
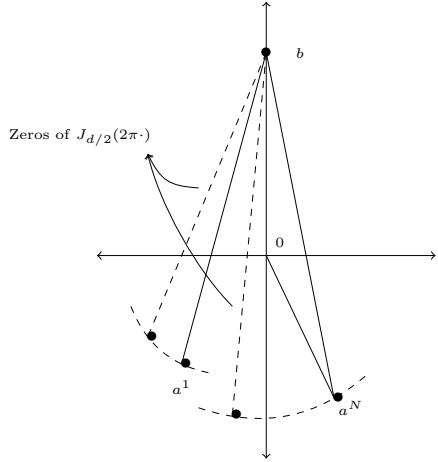
\begin{figure}[h!]
\centering
\scalebox{0.9}{\begin{tikzpicture}
	\begin{pgfonlayer}{nodelayer}
		\node [style=none] (0) at (0, 0) {};
		\node [style=none] (1) at (0, 3) { {$\bullet$} };
		\node [style=none] (2) at (-2, -0.75) {};
		\node [style=none] (3) at (-0.75, -1.75) {};
		\node [style=none] (4) at (2.5, 0) {};
		\node [style=none] (5) at (-2.5, 0) {};
		\node [style=none] (6) at (-1.75, -1.2) { { $\bullet$} };
		\node [style=none] (7) at (-1.25, -1.6) { { $\bullet$} };
		\node [style=none] (8) at (-1, -2.25) {};
		\node [style=none] (9) at (1.5, -1.75) {};
		\node [style=none] (10) at (-0.5, -2.35) { { $\bullet$} };
		\node [style=none] (11) at (1, -2.1) { { $\bullet$} };
		\node [style=none] (13) at (0, -3) {};
		\node [style=none] (14) at (0, 3.75) {};
		\node [style=none] (15) at (0.5, 3) {{\tiny $b$} };
		\node [style=none] (17) at (-1.25, -1.95) { {\tiny $a^1$}};
		\node [style=none] (18) at (1.25, -2.25) { {\tiny $a^N$}};
		\node [style=none] (19) at (-1.75, 1.5) {};
		\node [style=none] (20) at (-1, 1) {};
		\node [style=none] (21) at (-2.75, 1.75) {{\tiny Zeros of $J_{d/2}(2\pi \cdot)$}};
		\node [style=none] (22) at (-1.75, 1.5) {};
		\node [style=none] (23) at (-2, 4.5) {};
		\node [style=none] (24) at (-0.5, -0.75) {};
		\node [style=none] (25) at (0.2, 0.2) {\tiny $0$};
	\end{pgfonlayer}
	\begin{pgfonlayer}{edgelayer}
		\draw [style=dashed, bend right] (2.center) to (3.center);
		\draw [style=dashed] (1.center) to (6.center);
		\draw (1.center) to (7.center);
		\draw [style=dashed, bend right] (8.center) to (9.center);
		\draw [style=dashed] (1.center) to (10.center);
		\draw (1.center) to (11.center);
		\draw [style=dir] (0.center) to (14.center);
		\draw [style=dir] (0.center) to (13.center);
		\draw [style=dir] (0.center) to (5.center);
		\draw [style=dir] (0.center) to (4.center);
		\draw [style=dir, bend right=330, looseness=1.25] (20.center) to (19.center);
		\draw (0.center) to (11.center);
		\draw [style=dir, bend left=16, looseness=0.75] (24.center) to (22.center);
	\end{pgfonlayer}
\end{tikzpicture}}
 \caption{\footnotesize Here, $b$ is fixed and we move $a^j$ until its distance from $b$ is a zero of Bessel function.}
\label{fig:pendulum}
\end{figure}

\vskip.125in 

\subsection{Proof of part iv) in the case $\Omega=Q_d$} Let $A=\{a^1, a^2, \dots, a^N \}$, where each $a^j$ has integer coordinates. Let $b\in \Bbb R^d\setminus A$ with integer coordinates, for example. Then $e^{2 \pi i x \cdot b}$ is orthogonal to $e^{2 \pi i x \cdot a^j}$, $1 \leq j \leq N$ in view of (\ref{cubefourier}). 

\vskip.125in 

\subsection{Proof of part v)} Since $a^1 \not=a^2$, consider the bisector of these two points. Every point on the bisector is equidistant from $a^1$ and $a^2$ and every sufficiently large distance to $a^1$ and $a^2$ is realized. Therefore we can choose a point on this bisector a distance $R$ from $a^1$ and $a^2$, where $R$ is a zero of $J_1(2 \pi \cdot)$, which completes the proof in view of the discussion in the proof of part iii). 

\vskip.125in 

\subsection{Proof of part vi)} We first write down the argument in ${\Bbb R}^3$ and then indicate how to extend it to higher dimensions. Let $A=\{a^1,a^2,a^3\}$, where $a^1=(0,0,0)$, $a^2=(1,0,0)$ and $a^3=(\alpha,0,0)$. Note that the points are chosen to live on a line since otherwise the proof of part iii) would imply that ${\mathcal E}(A)$ is an incomplete system. Let $b=(b_1,b_2,b_3)$ be any point in ${\Bbb R}^3$ such that its distance to $a^1, a^2, a^3$ is $r_1, r_2, r_3$, respectively, where each $r_j$ is a zero of $J_{\frac{3}{2}}(2 \pi \cdot)$. We want to show that if $\alpha$ is chosen appropriately,  then the distances from $b$ to $a^1, a^2, a^3$ cannot in fact all be zeroes of $J_{\frac{3}{2}}$. For this, let 
\begin{subequations}\label{distances}
\begin{align}|b-a^1|^2&=  b_1^2+b_2^2+b_3^2=r_1^2, \\
  |b-a^2|^2 &=  {(b_1-1)}^2+b_2^2+b_3^2=r_2^2,  \  \ \text{and} \\
|b-a^3|^2 &=   {(b_1-\alpha)}^2+b_2^2+b_3^2=r_3^2. 
\end{align}
\end{subequations}

\noindent The relations in  (\ref{distances}) imply that 
$$ r_3^2-r_1^2-\alpha(r_2^2-r_1^2)=\alpha^2-\alpha,$$ 
or, 
$$ \alpha^2+\alpha(r_2^2-r_1^2-1)-(r_3^2-r_1^2)=0.$$ 
The quadratic formula yields 
\begin{align}\label{alpha}
\alpha=\frac{-(r_2^2-r_1^2-1) \pm \sqrt{{(r_2^2-r_1^2-1)}^2+4(r_3^2-r_1^2)}}{2}.
\end{align}

In view of (\ref{alpha}) and (\ref{zeroes}), the set of possible values of $\alpha$, as $r_1,r_2,r_3$ ranges over the zeroes of $J_{\frac{3}{2}}(2 \pi \cdot)$, is countable. It follows that there are uncountably many values of $\alpha$ for which that family ${\mathcal E}(A)$ is complete, where $A=\{a^1,a^2,a^3\}$, with $a^1=(0,0,0)$, $a^2=(1,0,0)$ and $a^3=(\alpha,0,0)$. \\

To prove the claim for $d\geq 4$, first 
 we  take 
 $$a^1=(0,0,0), a^2=(1,0,0), a^3=(\alpha_3,0,0), \dots, a^N=(\alpha_N,0,0)\in \Bbb R^3, \quad \text{with} \ N\geq 4.$$ 
 
  Repeating the calculation above yields the equations 
$$ \alpha_k^2+\alpha_k(r_2^2-r_1^2-1)-(r_k^2-r_1^2)=0, \ 3 \leq k \leq N.$$ 

Applying the quadratic formula to each equation as above shows that as $r_j$'s range over the zeroes of $J_{\frac{3}{2}}(2 \pi \cdot)$, there is only a countable number of possible values for each $\alpha_k$. This shows that $\alpha_k$'s can be chosen so that whatever $b$ we choose, $e^{2 \pi i x \cdot b}$ cannot be orthogonal to all of the $e^{2 \pi i x \cdot a^j}$'s. This argument extends readily to higher dimensions since we can view ${\Bbb R}^3$ as sitting inside ${\Bbb R}^d$, $d \ge 4$ by setting the remaining entries to equal to $0$, and then repeat the argument above for $N=d$. 

 \vskip.25in 
 
 \section{Proof of Theorem \ref{mainapprox}} 
 
 \vskip.125in 
  We shall make use of the following classical result. 
   
\begin{lemma}\label{stationaryphase} (\cite{H62}; see also \cite{GS58}) Let $K$ be a bounded symmetric convex set with boundary  $\partial K$. Given $\omega \in S^{d-1}$, let $\kappa(\omega)$ denote the Gaussian curvature of $\partial K$ at the (unique) point where the unit normal is $\omega$. Then 
\begin{equation} \label{herzformula} \widehat{\chi}_K(\xi)=\kappa^{-\frac{1}{2}} \left( \frac{\xi}{|\xi|} \right) 
\sin \left( 2 \pi \left( \rho^{*}(\xi)-\frac{d-1}{8}\right) \right){|\xi|}^{-\frac{d+1}{2}}+{\mathcal D}_K(\xi), \end{equation} where 
$$ |{\mathcal D}_K(\xi)| \leq C_K {|\xi|}^{-\frac{d+3}{2}},$$ and 
\begin{equation}\label{dualgauge} \rho^{*}(\xi)=\sup_{x \in \partial K} x \cdot \xi. \end{equation}  
\end{lemma}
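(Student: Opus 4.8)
The plan is to establish \eqref{herzformula} by the method of stationary phase, after first converting the volume integral defining $\widehat{\chi}_K$ into an oscillatory integral over the boundary $\partial K$. Writing $\widehat{\chi}_K(\xi)=\int_K e^{-2\pi i x\cdot\xi}\,dx$ and observing that $e^{-2\pi i x\cdot\xi}=\nabla\cdot\bigl(\tfrac{i\xi}{2\pi|\xi|^2}e^{-2\pi i x\cdot\xi}\bigr)$, the divergence theorem gives
\begin{equation}\label{surfacereduction}
\widehat{\chi}_K(\xi)=\frac{i}{2\pi|\xi|^2}\int_{\partial K}\bigl(\xi\cdot n(x)\bigr)\,e^{-2\pi i x\cdot\xi}\,d\sigma(x),
\end{equation}
where $n(x)$ is the outward unit normal and $d\sigma$ is surface measure. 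Setting $\xi=|\xi|\omega$ with $\omega\in S^{d-1}$, the right-hand side is an oscillatory integral with phase $x\mapsto -x\cdot\omega$ and large parameter $|\xi|$.

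Next I would locate and analyze the critical points. The stationary points of the phase on $\partial K$ are exactly the points where $n(x)=\pm\omega$; strict convexity together with the everywhere non-vanishing curvature hypothesis guarantees there are precisely two, namely the point $x_+(\omega)$ maximizing $x\cdot\omega$ (where $n=\omega$) and $x_-(\omega)$ minimizing it (where $n=-\omega$), and the symmetry $K=-K$ forces $x_-=-x_+$ and $\kappa(-\omega)=\kappa(\omega)$. The crucial geometric input is that the Hessian of the phase restricted to $\partial K$ at such a pole is, after accounting for the surface measure, the second fundamental form, whose determinant is the Gaussian curvature $\kappa(\omega)$; since $\kappa(\omega)\neq 0$ the critical points are non-degenerate, the Hessian being positive definite at $x_+$ and negative definite at $x_-$.

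I would then apply the standard $(d-1)$-dimensional stationary phase expansion at each pole. Each pole contributes a leading term of order $|\xi|^{-(d-1)/2}$ carrying the factor $|\det\mathrm{Hess}|^{-1/2}=\kappa(\omega)^{-1/2}$, a phase $e^{\pm i\pi(d-1)/4}$ coming from the signature of the Hessian, the amplitude value $\xi\cdot n=\pm|\xi|$, and the oscillation $e^{-2\pi i x_\pm\cdot\xi}$, where $x_+\cdot\xi=\rho^*(\xi)$ and $x_-\cdot\xi=-\rho^*(\xi)$ by \eqref{dualgauge}. Adding the two conjugate contributions, the exponentials assemble into $\sin\bigl(2\pi(\rho^*(\xi)-\tfrac{d-1}{8})\bigr)$, and collecting the powers of $|\xi|$ via $\tfrac{1}{|\xi|}\cdot|\xi|^{-(d-1)/2}=|\xi|^{-(d+1)/2}$ produces the stated main term. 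The precise leading constant is dictated by the normalizations of $d\sigma$ and of $\kappa$, and I would pin it down by checking against the exact identity \eqref{ballfourierformula} for $\Omega=B_d$, where $\kappa\equiv 1$ and $\rho^*(\xi)=|\xi|$, using the large-argument asymptotics $J_\nu(z)\sim\sqrt{2/\pi z}\,\cos(z-\tfrac{\nu\pi}{2}-\tfrac{\pi}{4})$.

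Finally, the error term $\mathcal{D}_K$ absorbs two things: the next-order term of the stationary phase expansion at each pole, which is $O(|\xi|^{-(d+3)/2})$ and is where smoothness of $\partial K$ enters, and the contribution of the portion of $\partial K$ away from the two poles, which decays faster than any power by non-stationary phase (repeated integration by parts, since the tangential phase gradient is bounded below there). The uniform bound $|\mathcal{D}_K(\xi)|\le C_K|\xi|^{-(d+3)/2}$ then follows from compactness of $S^{d-1}$ and a uniform positive lower bound on $\kappa$. The main obstacle lies in the critical point analysis: setting up the local parametrization at each pole, rigorously identifying the phase Hessian with the second fundamental form, and—most delicately—controlling all constants and remainders uniformly as $\omega$ ranges over $S^{d-1}$.
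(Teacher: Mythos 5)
The paper does not actually prove this lemma: it is quoted as a classical result, attributed to Herz \cite{H62} (see also \cite{GS58}), and used as a black box in the proof of Theorem \ref{mainapprox}. So there is no in-paper argument to compare against; what you have written is a reconstruction of the classical proof, and it is essentially Herz's own route: reduce the volume integral to a boundary integral via the divergence theorem (your identity $e^{-2\pi i x\cdot\xi}=\nabla\cdot\bigl(\tfrac{i\xi}{2\pi|\xi|^2}e^{-2\pi i x\cdot\xi}\bigr)$ checks out), identify the two stationary points $x_{\pm}(\omega)$ where the normal is $\pm\omega$, recognize the phase Hessian on $\partial K$ as the second fundamental form so that $|\det\mathrm{Hess}|=\kappa$, sum the two conjugate pole contributions into a sine, and dump the next-order stationary-phase terms plus the non-stationary region into $\mathcal{D}_K$. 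All of these steps are correct, and your error bookkeeping is right: the prefactor $|\xi|^{-1}$ times the leading pole contribution $|\xi|^{-(d-1)/2}$ gives the main term $|\xi|^{-(d+1)/2}$, and the next correction is $|\xi|^{-(d+3)/2}$. One substantive point your consistency check would surface: carrying out the computation exactly as you describe, the two poles contribute $\pm\tfrac{i}{2\pi|\xi|}\,|\xi|^{-(d-1)/2}\kappa^{-1/2}(\omega)e^{\pm i\pi(d-1)/4}e^{\mp 2\pi i\rho^{*}(\xi)}$, whose sum is $\tfrac{1}{\pi}\kappa^{-1/2}(\omega)\sin\bigl(2\pi(\rho^{*}(\xi)-\tfrac{d-1}{8})\bigr)|\xi|^{-\frac{d+1}{2}}$, and the same factor $\tfrac{1}{\pi}$ appears when one expands \eqref{ballfourierformula} by the Bessel asymptotics (for the unit ball $\kappa\equiv 1$, $\rho^{*}(\xi)=|\xi|$). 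So the formula \eqref{herzformula} as stated in the paper is off by this multiplicative constant; this is harmless for the application (only the location of the sign changes of the sine factor and the $|\xi|^{-\frac{d+3}{2}}$ error decay are used in the proof of Theorem \ref{mainapprox}), but your plan to ``pin down the constant against the ball'' is exactly the right instinct and would catch it. The remaining gaps you flag yourself (local graph parametrization at the poles, uniformity of the remainder in $\omega\in S^{d-1}$ via compactness and $\min_\omega\kappa>0$) are genuine but routine, and they do require the standing hypotheses, implicit in the lemma's statement, that $\partial K$ is smooth with everywhere non-vanishing curvature.
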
 
 
 To prove  Theorem \ref{mainapprox}, 
we assume, for the sake of contradiction, that $L^2(B_d)$ possesses a $\phi$-approximately orthogonal basis ${\mathcal E}(A)$ with $\phi$ satisfying (\ref{phismall}). We need the following lemma.

\begin{lemma}\label{density}   Let $\Omega$ be a domain with positive measure. 
Assume that ${\mathcal E}(A)$ is a basis for $L^2(\Omega)$. Then the following holds: 
\begin{enumerate} 
\item\label{uniformly separated}  The set $A$ is uniformly separated,  i.e., 
 $$\inf\{ |a-a'|: \  a, a'\in A, \ a\neq a'\}>0  ,$$ 
 thus, $A$ is discrete. 
 
 \vskip.125in 
 
\item\label{Beurling-density} $A$ has positive upper Beurling density, i.e., 
\begin{align}\label{upper-Beurling-Density} 
c:= \limsup_{r\to \infty} \sup_{x\in \Bbb R^d} \cfrac{\sharp(A\cap B(r,x))}{|B(r,x)|}>0.
\end{align} 
Here, $B(x,r)$ is the ball centered at $x$ with radius $r$. 
\end{enumerate} 
\end{lemma}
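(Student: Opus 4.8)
The plan is to prove the two claims of Lemma~\ref{density} separately, both relying on the basic fact that a basis for $L^2(\Omega)$ cannot be ``too sparse'' or have ``too clustered'' frequencies. The underlying mechanism in both parts is the observation that membership in $\mathcal{E}(A)$ as a basis forces a quantitative relationship between the frequency set $A$ and the Fourier-analytic structure of $\chi_\Omega$; in particular, if two frequencies $a,a'$ are very close, the corresponding exponentials are nearly parallel in $L^2(\Omega)$, which is incompatible with them both being independent members of a basis.

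For part~\eqref{uniformly separated}, I would argue by contradiction: suppose there exist sequences $a_n \neq a_n'$ in $A$ with $|a_n - a_n'| \to 0$. Consider the normalized exponentials $e_n(x) = e^{2\pi i x \cdot a_n}$ and $e_n'(x) = e^{2\pi i x \cdot a_n'}$ restricted to $\Omega$. The key computation is to estimate the $L^2(\Omega)$-distance (or, after normalization, the angle) between $e_n$ and $e_n'$. Since
$$\|e_n - e_n'\|_{L^2(\Omega)}^2 = \int_\Omega |e^{2\pi i x \cdot a_n} - e^{2\pi i x \cdot a_n'}|^2\, dx = \int_\Omega |e^{2\pi i x \cdot (a_n - a_n')} - 1|^2\, dx,$$
and $|e^{2\pi i x \cdot (a_n-a_n')} - 1| \le 2\pi |x|\,|a_n - a_n'|$ for $x \in \Omega$ (using that $\Omega$ is bounded), this distance tends to $0$. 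Two basis elements becoming arbitrarily close contradicts the uniform bound on the biorthogonal/dual system that any Schauder or Riesz basis must satisfy; alternatively, if ``basis'' here means Riesz basis, the lower Riesz bound is directly violated. I would state precisely which notion of basis is in force and invoke the corresponding stability estimate.

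For part~\eqref{Beurling-density}, positive upper Beurling density, I would again proceed by contraposition, using the fact that a basis must in particular be \emph{complete}, so its frequencies cannot be too sparse. The cleanest route is to compare against the known density requirements for completeness: if the upper Beurling density of $A$ were zero, then in arbitrarily large balls $B(x,r)$ the fraction $\sharp(A \cap B(x,r))/|B(x,r)|$ would tend to $0$, meaning $A$ is asymptotically negligible. One then shows this forces $\mathcal{E}(A)$ to fail completeness (or fail to be a frame/basis) in $L^2(\Omega)$, contradicting the hypothesis. This can be made rigorous either by a direct volume-counting/covering argument showing the span of $\mathcal{E}(A)$ cannot be dense, or by appealing to a Landau-type necessary density theorem for sampling and interpolation, which guarantees that a basis of exponentials has Beurling density equal to $|\Omega|$, in particular positive.

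The main obstacle I anticipate is part~\eqref{Beurling-density}: making the density lower bound rigorous requires either a careful self-contained covering argument or a clean citation of the appropriate density theorem, and one must be attentive to exactly which notion of ``basis'' (Schauder, Riesz, exact system) the paper adopts, since the available density machinery (Landau densities, frame bounds) is cleanest for Riesz bases and frames. Part~\eqref{uniformly separated} is comparatively routine once the $L^2(\Omega)$-continuity estimate above is in hand; the only care needed is to rule out the degenerate possibility that the near-collision of frequencies is masked by the geometry of $\Omega$, which the boundedness of $\Omega$ prevents.
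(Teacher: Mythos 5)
Your proposal is correct in substance, but on part (2) it takes a genuinely different --- and in fact sounder --- route than the paper. For part (1) the paper gives no argument at all: it declares the separation well-known and cites Olson--Zalik and Iosevich--Kolountzakis; your perturbation estimate $\|e^{2\pi i x\cdot a}-e^{2\pi i x\cdot a'}\|_{L^2(\Omega)}\lesssim_{\Omega}|a-a'|$, played against the uniform bound on the biorthogonal functionals (or the lower Riesz bound), is precisely the standard argument behind those citations, so there you are supplying what the paper outsources. For part (2) the divergence is real: the paper argues from separation alone, setting $\alpha=\inf\{|a-a'|:\ a\neq a'\}>0$ and deriving the packing estimate $\sharp(A\cap B(x,r))/|B(x,r)|\le (r+\alpha/2)^d/r^d$, concluding an inequality of the form $c\le(\alpha/2)^d$. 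That is an \emph{upper} bound: it shows the upper Beurling density is finite, but it cannot yield the stated positivity $c>0$, since a uniformly separated set can perfectly well have zero density (e.g.\ $A=\{(2^n,0,\dots,0)\}$), and positivity is what the downstream application (positive Lebesgue density of the thickened set $E_\delta$, feeding into the Furstenberg--Katznelson--Weiss theorem) actually requires. Positivity must come from the fact that a basis is in particular complete, which is exactly what your contraposition via a Landau-type necessary density theorem supplies: a frame or Riesz basis of exponentials for $L^2(\Omega)$ has lower, hence upper, Beurling density at least $|\Omega|>0$. Your caveat about which notion of ``basis'' is in force is apt --- Landau's theorem covers frames and Riesz bases, not arbitrary Schauder bases --- but the paper's own references (Riesz bases of translates, Fourier frames) indicate that this is the intended setting. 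The one piece of your sketch I would discard is the alternative ``direct volume-counting/covering argument'' that the span cannot be dense: no elementary argument of that kind is apparent, and the Landau citation is the clean way to close the step. In short, your part (1) matches the standard (cited) proof, while your part (2) does not reproduce the paper's argument but rather repairs it.
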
 
\begin{proof}

To see this note that the fact that (\ref{uniformly separated}) holds is well-known (see e.g. \cite{OZ} and \cite{IK06}). To prove (\ref{Beurling-density}), let 
$$\alpha:=\inf\{ |a-a'|: \  a, a'\in A, \ a\neq a'\}.$$ By (\ref{uniformly separated}), $\alpha>0$  and it is easy to see that for any $x\in \Bbb R^d$ and $r>0$ 
  
\begin{align}\label{density_upper_estimation} \cfrac{\sharp(A \cap B(x,r))}{|B(x,r)|} \leq  \frac{(r+\alpha/2)^d}{r^d}. \end{align} Since $\alpha$ only depends on $\Omega$, the proof holds by the  relation (\ref{density_upper_estimation}). Moreover, we obtain $c\leq (\alpha/2)^d$. 
     \end{proof} 

Let  $0<\delta<\alpha$ and  let $E_{\delta}$ denote the $\delta$-neighborhood of set $A$, as  depicted in Figure \ref{fig:dots}.   
\begin{center}
\begin{figure}[h!]
\scalebox{0.8}{\begin{tikzpicture}
	\begin{pgfonlayer}{nodelayer}
		\node [style=none] (0) at (-5, 4) {};
		\node [style=none] (1) at (-5, 0) {};
		\node [style=none] (2) at (5, 0) {};
		\node [style=none] (3) at (5, 4) {};
		\node [circle, fill, scale=0.5, draw=black] (4) at (-4.5, 3.75) {};
		\node [circle, fill, scale=0.5, draw=black] (5) at (-4.75, 3.25) {};
		\node [circle, fill, scale=0.5, draw=black] ((6) at (-3.75, 3.75) {};
		\node [circle, fill, scale=0.5, draw=black] ( (7) at (-4, 3.25) {};
		\node [circle, fill, scale=0.5, draw=black] ( (8) at (-4.5, 2.5) {};
		\node [circle, fill, scale=0.5, draw=black] ( (9) at (-3.75, 2.75) {};
		\node [circle, fill, scale=0.5, draw=black] ((10) at (-3, 3.25) {};
		\node [circle, fill, scale=0.5, draw=black] ((11) at (-2, 3.75) {};
		\node [circle, fill, scale=0.5, draw=black] (12) at (-2.25, 3.25) {};
		\node [circle, fill, scale=0.5, draw=black] (13) at (-2.75, 2.5) {};
		\node [circle, fill, scale=0.5, draw=black] (14) at (-3.75, 2) {};
		\node [circle, fill, scale=0.5, draw=black] (15) at (-4.5, 1.75) {};
		\node [circle, fill, scale=0.5, draw=black] (16) at (-4.75, 1) {};
		\node [circle, fill, scale=0.5, draw=black] (17) at (-3.5, 1) {};
		\node [circle, fill, scale=0.5, draw=black] (18) at (-4.25, 1.25) {};
		\node [circle, fill, scale=0.5, draw=black] (19) at (-2.75, 1.5) {};
		\node [circle, fill, scale=0.5, draw=black] (20) at (-4.25, 0.5) {};
		\node [circle, fill, scale=0.5, draw=black] (21) at (-3, 0.5) {};
		\node [circle, fill, scale=0.5, draw=black] (22) at (-1.25, 3.5) {};
		\node [circle, fill, scale=0.5, draw=black] (23) at (-1.5, 3) {};
		\node [circle, fill, scale=0.5, draw=black] (24) at (-2, 2.5) {};
		\node [circle, fill, scale=0.5, draw=black] (25) at (-0.25, 3.5) {};
		\node [circle, fill, scale=0.5, draw=black] (26) at (-0.75, 2.5) {};
		\node [circle, fill, scale=0.5, draw=black] (27) at (-1.5, 2) {};
		\node [circle, fill, scale=0.5, draw=black] (28) at (-2, 1.5) {};
		\node [circle, fill, scale=0.5, draw=black] (29) at (-0.5, 1.75) {};
		\node [circle, fill, scale=0.5, draw=black] (30) at (0.25, 2.75) {};
		\node [circle, fill, scale=0.5, draw=black] (31) at (0.5, 3.5) {};
		\node [circle, fill, scale=0.5, draw=black] (32) at (1.5, 3.5) {};
		\node [circle, fill, scale=0.5, draw=black] (33) at (1.25, 3) {};
		\node [circle, fill, scale=0.5, draw=black] (34) at (1, 2.25) {};
		\node [circle, fill, scale=0.5, draw=black] (35) at (0.25, 2.25) {};
		\node [circle, fill, scale=0.5, draw=black] (36) at (0, 1) {};
		\node [circle, fill, scale=0.5, draw=black] (37) at (0.5, 1.75) {};
		\node [circle, fill, scale=0.5, draw=black] (38) at (-1.25, 1.25) {};
		\node [circle, fill, scale=0.5, draw=black] (39) at (-0.75, 1) {};
		\node [circle, fill, scale=0.5, draw=black] (40) at (-2.25, 0.75) {};
		\node [circle, fill, scale=0.5, draw=black] (41) at (-1.5, 0.75) {};
		\node [circle, fill, scale=0.5, draw=black] (42) at (-0.75, 0.25) {};
		\node [circle, fill, scale=0.5, draw=black] (43) at (-2.25, 0.25) {};
		\node [circle, fill, scale=0.5, draw=black] (44) at (0, 0.25) {};
		\node [circle, fill, scale=0.5, draw=black] (45) at (0.75, 1) {};
		\node [circle, fill, scale=0.5, draw=black] (46) at (1.25, 0.75) {};
		\node [circle, fill, scale=0.5, draw=black] (47) at (1.5, 1.75) {};
		\node [circle, fill, scale=0.5, draw=black] (48) at (0.75, 0.5) {};
		\node [circle, fill, scale=0.5, draw=black] (49) at (2.25, 0.5) {};
		\node [circle, fill, scale=0.5, draw=black] (50) at (2.25, 1) {};
		\node [circle, fill, scale=0.5, draw=black] (51) at (1.5, 0.25) {};
		\node [circle, fill, scale=0.5, draw=black] (52) at (2.5, 2) {};
		\node [circle, fill, scale=0.5, draw=black] (53) at (2.75, 3) {};
		\node [circle, fill, scale=0.5, draw=black] (54) at (1.75, 2.5) {};
		\node [circle, fill, scale=0.5, draw=black] (55) at (2, 3.5) {};
		\node [circle, fill, scale=0.5, draw=black] (56) at (3, 3.5) {};
		\node [circle, fill, scale=0.5, draw=black] (57) at (4.5, 3.5) {};
		\node [circle, fill, scale=0.5, draw=black] (58) at (3.75, 3.75) {};
		\node [circle, fill, scale=0.5, draw=black] (59) at (3.75, 3) {};
		\node [circle, fill, scale=0.5, draw=black] (60) at (4.5, 3) {};
		\node [circle, fill, scale=0.5, draw=black] (61) at (4.5, 2) {};
		\node [circle, fill, scale=0.5, draw=black] (62) at (4, 2.5) {};
		\node [circle, fill, scale=0.5, draw=black] (63) at (3.25, 2.25) {};
		\node [circle, fill, scale=0.5, draw=black] (64) at (3.5, 1.75) {};
		\node [circle, fill, scale=0.5, draw=black] (65) at (4, 1.5) {};
		\node [circle, fill, scale=0.5, draw=black] (66) at (3.25, 0.75) {};
		\node [circle, fill, scale=0.5, draw=black] (67) at (3.25, 1.25) {};
		\node [circle, fill, scale=0.5, draw=black] (68) at (4.25, 0.75) {};
		\node [circle, fill, scale=0.5, draw=black] (69) at (4.5, 1.25) {};
		\node [circle, fill, scale=0.5, draw=black] (70) at (3.75, 0.5) {};
		\node [circle, fill, scale=0.5, draw=black] (71) at (2.75, 1.5) {};
		\node [circle, fill, scale=0.5, draw=black] (72) at (2, 1.5) {};
		\node [circle, fill, scale=0.5, draw=black] (73) at (2.25, 2.75) {};
		\node [circle, fill, scale=0.5, draw=black] (74) at (4.75, 0.25) {};
		\node [style=none] (75) at (6, 3.25) {};
		\node [style=none] (76) at (6.8, 3.25) {{\small $B_d(a,\delta)$}};
		\node [style=none] (77) at (6, 3.75) {};
		\node [style=none] (78) at (5.5, 3.5) {};
		\node [style=none] (80) at (0, -0.5) {{\small $\Bbb R^d$}};

	\end{pgfonlayer}
	\begin{pgfonlayer}{edgelayer}
	\filldraw[fill=gray!50!white, draw=black]  (-5, 4)  -- (-5, 0) -- (5, 0) -- (5, 4) --  (-5, 4) ;
		\draw [style=dir, dashed, bend left]  (75.center) to  (4.7, 2) ;
		\draw [style=dir, dashed, bend left]  (75.center) to (4.7, 3.4);
	\end{pgfonlayer}
\end{tikzpicture}}
\caption{\footnotesize In this graph, each node represents a ball centered at $a\in A$ with radius $\delta$.}
\label{fig:dots}
\end{figure}
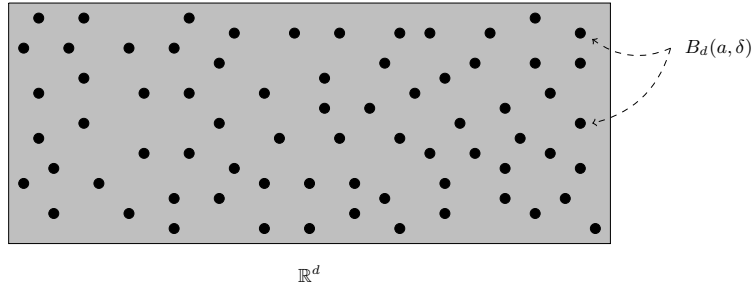
\end{center}
This allows us to obtain an estimate on the upper and lower Lebesgue density of the thickened set, as follows: 
\begin{lemma} \label{furstenbergdensity} $E_\delta$ has positive  upper and lower Lebesgue density. 
\end{lemma}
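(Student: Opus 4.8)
The plan is to sandwich the Lebesgue density of $E_\delta$ between two positive constants, feeding in the two conclusions of Lemma \ref{density} separately for the upper and lower bounds. Throughout write $\omega_d := |B(0,1)|$ and recall $\alpha = \inf\{|a-a'| : a,a'\in A,\ a\neq a'\}>0$. Since $E_\delta$ is increasing in $\delta$, the density of $E_\delta$ is monotone in $\delta$, so it suffices to prove positivity for small $\delta$; I may therefore assume $\delta<\alpha/2$, in which case uniform separation (Lemma \ref{density}(\ref{uniformly separated})) makes the balls $\{B(a,\delta)\}_{a\in A}$ pairwise disjoint, each of measure $\omega_d\delta^d$. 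For $a\in A\cap B(x,r)$ one has $B(a,\delta)\subseteq B(x,r+\delta)$, so
$$ |E_\delta \cap B(x,r+\delta)| \ \ge\ \omega_d\delta^d\,\sharp\!\left(A\cap B(x,r)\right). $$
This reduces every density estimate for $E_\delta$ to a counting estimate for $A$.

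For the upper density I would use the positive upper Beurling density $c>0$ of Lemma \ref{density}(\ref{Beurling-density}). By definition of the $\limsup$, for each $\epsilon>0$ there are centers $x_n$ and radii $r_n\to\infty$ with $\sharp(A\cap B(x_n,r_n)) \ge (c-\epsilon)\,\omega_d r_n^d$. Inserting this into the displayed inequality gives
$$ \frac{|E_\delta\cap B(x_n,r_n+\delta)|}{|B(x_n,r_n+\delta)|} \ \ge\ (c-\epsilon)\,\omega_d\delta^d\,\frac{r_n^d}{(r_n+\delta)^d}\ \longrightarrow\ (c-\epsilon)\,\omega_d\delta^d, $$
so the upper Lebesgue density of $E_\delta$ is at least $c\,\omega_d\delta^d>0$.

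The lower density is where the genuine content lies, since it cannot follow from upper Beurling density alone: a uniformly separated set can have positive upper Beurling density while leaving arbitrarily large balls empty, and then $E_\delta$ has lower density $0$. What I need is that $A$ is relatively dense, i.e. there is a length $L$ such that every cube of side $L$ meets $A$. Granting this, partitioning a large ball $B(x,r)$ into $\approx \omega_d r^d/L^d$ such cubes yields $\sharp(A\cap B(x,r+\sqrt d\,L)) \gtrsim \omega_d r^d/L^d$, and the displayed reduction then gives
$$ \frac{|E_\delta\cap B(x,r')|}{|B(x,r')|}\ \gtrsim\ \frac{\omega_d\delta^d}{L^d}\ >\ 0 $$
uniformly in the center $x$ and for all large $r$, which is positive lower Lebesgue density.

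Consequently the whole lemma rests on the well-distribution (relative denseness) of $A$, and this is the step I expect to be the main obstacle; the counting and the disjointness bookkeeping above are routine. I would derive it from the hypothesis that $\mathcal E(A)$ is a basis of $L^2(B_d)$. The cleanest route is to invoke the classical density theory for exponential bases and frames (Landau-type; cf. the references \cite{OZ,IK06} already used for the uniform separation), which forces a positive lower Beurling density and hence rules out arbitrarily large empty balls. A more self-contained alternative is to argue by contradiction: were there empty balls $B(x_n,R_n)$ with $R_n\to\infty$, one could construct a nonzero $f\in L^2(B_d)$ whose Fourier transform vanishes on all of $A$, contradicting the completeness implied by the basis property. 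Either way, quantifying the absence of large gaps is the crux. Once positive upper and lower Lebesgue density of $E_\delta$ are established, $E_\delta$ is precisely the sort of positive-density set to which the Furstenberg--Katznelson--Weiss distance theorem (\cite{FKW90}) can be applied in the proof of Theorem \ref{mainapprox}.
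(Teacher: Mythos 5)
Your upper-density argument is, in substance, the paper's own argument: the paper also passes from the positive upper Beurling density of $A$ (Lemma \ref{density}, part (\ref{Beurling-density})) to positive upper Lebesgue density of $E_\delta$ by viewing $E_\delta$ as a union of pairwise disjoint $\delta$-balls and comparing measure with point count. You actually execute this step more carefully than the paper does: the paper's displayed estimate (\ref{density_upper_estimation_delta}) is the inequality $|E_\delta\cap B(x,r)|\leq \delta^d\,\sharp(A\cap B(x,r+\delta))$, which bounds the density from \emph{above}, whereas positivity requires the reverse comparison $|E_\delta\cap B(x,r+\delta)|\geq \omega_d\delta^d\,\sharp(A\cap B(x,r))$ — exactly the one you wrote; and the paper claims disjointness of the balls for all $0<\delta<\alpha$, while disjointness in fact needs $\delta\leq\alpha/2$, which your monotonicity reduction handles correctly.

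Where you genuinely diverge is on the lower density, and here your diagnosis is right and the paper is at fault: the paper asserts that \emph{both} halves of the lemma follow ``immediately'' from upper Beurling density, but, as you observe, a uniformly separated set of positive upper Beurling density can leave arbitrarily large balls empty, and then $E_\delta$ has lower Lebesgue density zero. The missing ingredient is relative denseness (well-distribution) of $A$; the paper's source contains a commented-out passage asserting precisely this, but it is not part of the printed proof, so the lower-density claim is simply not established there. Your plan to derive relative denseness from the basis hypothesis is the correct repair, but be aware it is the one step you have not actually proved: Landau-type necessary density conditions apply to frames and Riesz bases, so this route requires reading ``basis'' in that stronger sense (consistent with the citations \cite{OZ}, \cite{IK06}), and your alternative sketch (building $f\neq 0$ with $\widehat{f}$ vanishing on $A$ out of large empty balls) is itself nontrivial. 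Finally, note that the lower-density half is never used downstream — Theorem \ref{FKWtheorem} needs only positive \emph{upper} density — so both the paper's gap and your deferred step are harmless for the proof of Theorem \ref{mainapprox}.
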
 

\vskip.125in 
The proof of the lemma  is an immediate result of  Lemma \ref{density} (\ref{Beurling-density}). Indeed, for $0<\delta < \alpha$, the set $E_\delta$ is a disjoint union of  all  the  balls with centers in    $A$ and radius $\delta$.  Therefore we have 
\begin{align}\label{density_upper_estimation_delta} |E_\delta \cap B(x,r)| =  \sum_{\{a\in A:  |a-x|<r+\delta\}} |B_d(a,\delta)\cap B(x,r)|  \leq \delta^d \sharp(A\cap B(x,r+\delta)), \end{align}
hence the assertion of the lemma holds immediately. 

\vskip.125in 
 
The remainder of the proof of Theorem \ref{mainapprox} is going to use Lemma \ref{furstenbergdensity} and the following result due to Furstenberg, Katznelson and Weiss. 

\begin{theorem} (Furstenberg, Katznelson and Weiss (1986)) (\cite{FKW90}, Theorem A) \label{FKWtheorem} Let $E \subset {\Bbb R}^d$ be a set of positive upper Lebesgue density, in the sense that $ \limsup_{r \to \infty} \sup_x\frac{|E \cap B(x,r)|}{|B(x,r)|}=c>0$. Then there exists a threshold $L_0(E)$ such that for all $L>L_0$, there exist $x,y \in E$ such that $|x-y|=L$. In other words, every sufficiently large distance is realized in $E$. \end{theorem}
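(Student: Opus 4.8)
The plan is to prove the sharper statement that for every sufficiently large $L$ the natural \emph{spherical average} attached to $E$ is strictly positive, so that the distance $L$ is in fact realized. I would first pass from the set $E \subset {\Bbb R}^d$ to an ergodic-theoretic model by the Furstenberg correspondence principle: one produces a probability space $(X,\mu)$ carrying a measure-preserving action $(T_v)_{v \in {\Bbb R}^d}$ and a measurable set $B \subset X$ with $\mu(B) \geq c > 0$ such that, for each displacement $v = L\omega$ with $\omega \in S^{d-1}$, the positivity of $\mu\big(B \cap T_{L\omega}^{-1} B\big)$ forces a positive lower density of points $x \in E$ with $x + L\omega \in E$. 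This reduces the problem to showing that for all large $L$ there exists some $\omega \in S^{d-1}$ with $\mu\big(B \cap T_{L\omega}^{-1} B\big) > 0$, since unwinding the correspondence then yields $x,y \in E$ with $|x-y| = L$.

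To locate such an $\omega$ I would integrate over the sphere and study the spherical ergodic average
$$A_L g(x) = \int_{S^{d-1}} g\big(T_{L\omega}x\big)\, d\sigma(\omega),$$
so that $\int_{S^{d-1}} \mu\big(B \cap T_{L\omega}^{-1} B\big)\, d\sigma(\omega) = \int_X \mathbf{1}_B \cdot A_L \mathbf{1}_B \, d\mu$. The engine is a \emph{mean ergodic theorem for spherical averages}: as $L \to \infty$ one has $A_L g \to P g$ in $L^2(\mu)$, where $P$ is the orthogonal projection onto the subspace of $(T_v)$-invariant functions. Granting this, $\int_X \mathbf{1}_B \, A_L \mathbf{1}_B \, d\mu \to \langle \mathbf{1}_B, P\mathbf{1}_B \rangle = \|P\mathbf{1}_B\|_{L^2}^2$, and since $\int_X P\mathbf{1}_B \, d\mu = \mu(B) > 0$ we have $P\mathbf{1}_B \not\equiv 0$, so the limit is strictly positive. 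Hence there is a threshold $L_0$ beyond which the spherical integral is positive, which forces a good $\omega$ to exist and completes the argument for every $L > L_0$.

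The crux, and the step I expect to be the main obstacle, is the spherical mean ergodic theorem, and this is exactly where the analytic information already assembled in the paper enters. By the spectral theorem for the unitary ${\Bbb R}^d$-representation $U_v = g \circ T_v$, the operator $A_L$ acts as the spectral multiplier $\lambda \mapsto \widehat{\sigma}(L\lambda)$, where $\widehat{\sigma}$ is the Fourier transform of surface measure on $S^{d-1}$. The standard Bessel asymptotics — of the same type recorded in (\ref{ballfourierformula}) and the stationary-phase estimate of Lemma \ref{stationaryphase} — give $|\widehat{\sigma}(\xi)| \lesssim (1+|\xi|)^{-\frac{d-1}{2}}$, so $\widehat{\sigma}(L\lambda) \to 0$ for each $\lambda \neq 0$ while $\widehat{\sigma}(0) = 1$ selects precisely the invariant subspace; dominated convergence against the spectral measure then yields $\|A_L g - Pg\|_{L^2}^2 = \int_{\lambda \neq 0} |\widehat{\sigma}(L\lambda)|^2 \, d\langle E(\lambda)g, g\rangle \to 0$. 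The decay, and hence the whole scheme, requires $d \geq 2$, consistent with the failure of the statement in ${\Bbb R}$. A purely Fourier-analytic proof on ${\Bbb R}^d$ is equally possible — writing the count as $\int |\widehat{f}|^2 \, \widehat{\sigma}(L\xi)\, d\xi$ with $f = \mathbf{1}_{E \cap Q}$ on a large box $Q$ realizing the density, so that the spike of $\widehat{f}$ at $\xi \approx 0$ supplies the positive main term and the decay (\ref{upper-esitmation})-type bound kills high frequencies — but in that route the genuinely delicate point is the intermediate band $|\xi| \sim 1/L$, whose $L^2$ mass is not controlled by the main term and must be tamed by an auxiliary mollification; the ergodic argument packages this same difficulty cleanly into the convergence $A_L \to P$.
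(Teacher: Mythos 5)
First, a point of orientation: the paper does not prove this statement at all. It is quoted verbatim as Theorem A of \cite{FKW90} and used as a black box in the proof of Theorem \ref{mainapprox}, so there is no internal proof to compare yours against; your proposal has to stand on its own. On its merits, the second half of your argument is fine: the spherical mean ergodic theorem is true for $d \geq 2$, your proof of it is the standard one (spectral theorem for the unitary ${\Bbb R}^d$-action, the multiplier $\widehat{\sigma}(L\lambda)$, the decay $|\widehat{\sigma}(\xi)| \leq C(1+|\xi|)^{-\frac{d-1}{2}}$, dominated convergence against the spectral measure), the identity $\int_X P\mathbf{1}_B\,d\mu = \mu(B)$ is correct, and you rightly flag $d \geq 2$ as the point where curvature enters, consistent with the failure of the theorem on the line.

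The genuine gap is the first step, the correspondence principle, and it is not a technicality. For subsets of ${\Bbb R}^d$ and \emph{exact} configurations there is no off-the-shelf correspondence of the kind you invoke. The standard construction takes a weak-$*$ limit of translates of $\chi_E$ averaged over balls realizing the density; since point evaluation is not continuous in that topology and indicator functions are not preserved under weak-$*$ limits, the observable on the limit system is necessarily a mollified one, corresponding to $\chi_E * \psi_\rho$, rather than a genuine set $B$ with the transfer property you assert. Consequently, positivity of the limiting correlation pulls back only to the existence of $x$ with $(\chi_E*\psi_\rho)(x)>0$ and $(\chi_E*\psi_\rho)(x+L\omega)>0$, i.e.\ to points of $E$ lying within $\rho$ of two points at distance exactly $L$: you get all large distances realized \emph{up to an error} $O(\rho)$, which is strictly weaker than the theorem. (The natural repairs fail too: a phase space of closed sets makes $\{G : 0 \in G\}$ closed rather than open, so the portmanteau inequality runs in the wrong direction for the pull-back; ultraproduct/Loeb constructions preserve exact configurations but destroy measurability of the action in $v$, so the spherical average $A_L$ cannot even be defined.) Upgrading approximate to exact distances is precisely where every known proof — \cite{FKW90} included, and Bourgain's Fourier proof — must estimate the high-frequency remainder $\chi_E - \chi_E*\psi_\rho$ paired against the sphere using the same decay of $\widehat{\sigma}$; this is exactly the ``delicate band'' you concede in your Fourier-analytic aside while claiming the ergodic route packages it away. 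It does not: the mean ergodic theorem controls the limit system, but the loss occurs in the transfer back to $E$, and without this additional harmonic-analytic step your argument is incomplete. It is worth noting that for the application in this paper the weaker approximate statement would actually suffice, since Theorem \ref{FKWtheorem} is applied to the open set $E_\delta$, a union of $\delta$-balls, inside which an approximate distance can be corrected to an exact one; but as a proof of the theorem as stated, for a general measurable $E$, the gap is real.
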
 
  
 Applying 
   Theorem \ref{FKWtheorem} to the set $E_\delta$, it follows that there exists $L_0>0$ such that for every $L >L_0$, there exist $x,x' \in E_{\delta}$ with 
   distance exactley $L$, i.e, $$|x-x'|=L.$$

When  $K=B_d$ is the unit ball,  we have $\rho^{*}(\xi)=|\xi|$ in (\ref{dualgauge}). Applying Lemma \ref{stationaryphase} and the condition (\ref{phismall}), we see that for given $\epsilon>0$ there exists $R>0$ such that if $a,a' \in A$  with $|a-a'|>R$, then 
$$ \left|\sin \left(2 \pi \left(|a-a'|-\frac{d-1}{8} \right)\right)\right|<\epsilon.$$ 

\vskip.125in 

\noindent It follows that for some positive integer $k$ we must have 
\begin{align}\label{distance}
 |a-a'|=\frac{k}{2}+\frac{d-1}{8}+O(\epsilon) . 
 \end{align}

Let $E_{\delta}$ be as above.  Then for every $\epsilon>0$ there exists $R>0$ such that if $|x-x'|>R$, $x,x' \in E_{\delta}$, then by (\ref{distance}) we have 
\begin{equation} \label{busted} |x-x'|=\frac{k}{2}+\frac{d-1}{8}+O(\epsilon)+O(\delta). \end{equation}

If $\epsilon$ and $\delta$ are taken to be sufficiently small, then any sufficient large distance $L$ can not be realized in the set $E_\delta$  due to the relation  (\ref{busted}). So, we arrive to the contradiction by  Theorem \ref{FKWtheorem}, hence the Theorem \ref{mainapprox} is proved. 

\vskip.125in

\end{document}